\newtheorem{proposition}{Proposition}[section]
\newtheorem{theorem}[proposition]{Theorem}
\newtheorem{lem}[proposition]{Lemma}
\theoremstyle{remark} 
\theoremstyle{definition}
\newtheorem{exam}[proposition]{Example}
\newtheorem{rem}[proposition]{Remark}
\numberwithin{equation}{section}
\newcommand{\Z} {\mathbb{Z}}
\newcommand{\N} {\mathbb{N}}
\newcommand{\R} {\mathbb{R}}
\newcommand{\Q} {\mathbb{Q}}
\newcommand{\B} {\mathbb{B}}
\begin{document}

\title{Generic point equivalence and Pisot numbers}
\author{Shigeki Akiyama, Hajime Kaneko and Dong Han Kim}
\date{\today}

\subjclass[2010]{11K16, 37E05}

\keywords{normal number, beta expansion, generic point, Pisot number}

\maketitle

\begin{abstract}
Let $\beta >1$ be an integer or generally a Pisot number.
Put $T(x) = \{ \beta x \}$ on $[0,1]$ and let $S: [0,1]\to [0,1]$ be a piecewise linear transformation 
whose slopes have 
the form $\pm \beta^m$ with positive integers $m$. 
We give sufficient conditions that  
 $T$ and $S$ have the same generic points. 
\end{abstract}

\section{Introduction}

Let $b\geq 2$ be an integer and $T : [0,1] \to [0,1]$ a map given by $T(x) = \{b x\}$, 
where $\{x\}$ denotes the fractional part of $x$.
A real number $x \in [0,1]$ is called to be normal in base-$b$ 
if in the base-$b$ expansion of $x$ any pattern of length $L$ appears with relative frequency tending to $b^{-L}$. 
Wall~\cite{Wal} showed that $x$ is normal in base-$b$ if and only if $x$ is a $T$-generic point, i.e., 
its orbital points $x, T(x), T^2 (x), \dots $ distribute uniformly. 
We recall that nonzero integers $m$ and $n$ are multiplicatively dependent if there exists 
$(i,j)\in \Z^2\backslash \{(0,0)\}$ satisfying $m^i n^j= 1$. 
Maxfield~\cite{Max} proved that if two positive integers $b_1, b_2$ are multiplicatively dependent, then base-$b_1$ normality is equivalent to base-$b_2$ normality. 
Schweiger~\cite{Sch} and Vandehey~\cite{Van} showed that if two number theoretic transformations $T$ and $S$ satisfy $T^m = S^n$ 
for some positive integers $n,m$, then every $T$-normality is equivalent to $S$-normality.
Kraaikamp and Nakada~\cite{KN} 
gave counter examples that the other direction does not hold.
They used the jump transformation to show the equivalence of normality: {\bf normality equivalence}, in short. 

In this article, we relax a sufficient condition for normality equivalence and obtain 
infinite families of examples (see Examples \ref{FlipDecimal} and \ref{Luroth}). 
Moreover, we shall generalize the concept of normality equivalence to include systems whose invariant measures may be different.
Let $(X,\B,\mu,T)$ and $(X,\B,\nu, S)$ be two ergodic measure preserving systems 
with a common underlying space $X$. 
We assume that $X$ is a compact metric space,  
$\B$ is the sigma-algebra of Borel sets in $X$, and that $\mu,\nu$ are probability measures. 
A point $x\in X$ is called $T$-generic if $\lim_{N \to \infty} \frac 1N \sum_{n=0}^{N-1} f(T^n x) = \int_X f d\mu$  
 for any continuous function $f$ on $X$. 
We say that  $S$ and $T$  are {\bf generic point equivalent} if the set of $S$-generic points coincide with the set of $T$-generic points.
The main purpose of this paper is to give sufficient conditions for generic  point equivalence for $X=[0,1]$, using the Pyatetskii-Shapiro criterion. 

Let $\beta$ be a {\bf Pisot number}: a  real algebraic integer greater than one
whose Galois conjugates (except itself) have modulus less than one.
Note that any integer greater than 1 is a Pisot number. 
Put $T(x) = \{ \beta x \}$ on $[0,1]$.
Let $S: [0,1]\to [0,1]$ be a piecewise linear transformation. 
In Section 3, we give a sufficient condition for 
generic point equivalence of $S$ and $T$ in the case where 
the slopes of $S$ have the form $\pm \beta^m$ with positive integers $m$.
More precisely, we show that if $S$ admits an absolutely continuous invariant measure and the invariant density is bounded above and away 
 from 0 and all intercepts are in $\Q(\beta)$, 
then $T$ and $S$ are generic point equivalent.
In Section 2, we give Proposition~\ref{prop:2-2}, which is applicable to prove generic point 
equivalence. 
Using this proposition, we shall prove our main result. 

The Pisot slope condition is essential: our proof  depends on the structure of the point set generated by Pisot numbers.
The proof becomes simpler than those in literature 
and applicable to a wide class of piecewise linear maps.
In fact, we require no condition on the position of discontinuities. 
In particular, we provide a one parameter 
family of maps (the cardinality of the maps is uncountable) by continuously 
shifting the discontinuity so that 
all the maps in the family are generic point equivalent (see Example \ref{ACIM}).  This appears to be 
the first result on generic point equivalence among generically non-Markov piecewise linear maps.
 
\section{Criteria for generic point equivalence}

We now review the Pyatetskii-Shapiro criterion. 
Let $(X,\B,\mu, T)$ be an ergodic measure preserving system.
Denote the characteristic function of $V\in \B $ by $\chi_{V}$  
and the set of continuous functions on $X$ by $C(X)$.
Let $\mathcal{C}\subset \B$ be a semi-algebra generating $\B$ in the sense that the minimal sigma algebra including $\mathcal{C}$ is $\B$. 
Then the Pyatetskii-Shapiro criterion reads 

\begin{theorem}[{\cite{Pos}, Theorem 6}]\label{thm:1-1}
Let $(X,\B,\mu, T)$ be an ergodic measure preserving system.   
Let $x_0\in X$ and $\mathcal{C}\subset \B$ be a semi-algebra generating $\B$. 
We assume that any function $f\in C(X)$ is a limit point of  the set of the (finite) linear combinations of the characteristic functions of $V\in\mathcal{C}$ with respect to the sup norm.  
Suppose that there exists a positive constant $C$ satisfying 
\begin{equation}\label{eqn:1-1}
\limsup_{N\to\infty}\frac{1}{N}\sum_{n=0}^{N-1}\chi_{I}(T^{n}x_0)\leq C \mu(I)  
\end{equation}
for any $I\in \mathcal{C}$. Then, $x_0$ is a $T$-generic point.
\end{theorem}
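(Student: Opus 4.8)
The plan is to translate the statement about the single orbit of $x_0$ into one about its empirical measures $\mu_N:=\frac1N\sum_{n=0}^{N-1}\delta_{T^n x_0}$, which are Borel probability measures on the compact metric space $X$. Being $T$-generic is precisely the assertion that $\mu_N\to\mu$ in the weak-$*$ topology. Since the space of Borel probability measures on $X$ is weak-$*$ compact and metrizable, it is enough to show that every weak-$*$ limit point of $(\mu_N)$ equals $\mu$. Each such limit point $\mu_\infty$ is $T$-invariant: pushing $\mu_N$ forward by $T$ changes it by a signed measure of total mass at most $2/N$, so $T_*\mu_{N_k}\to\mu_\infty$ forces $T_*\mu_\infty=\mu_\infty$. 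Thus the real work is to rule out any limit measure other than $\mu$, using only the one-sided bound \eqref{eqn:1-1}.

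First I would upgrade \eqref{eqn:1-1} from the semi-algebra $\mathcal{C}$ to a statement about all nonnegative continuous functions, namely $(\ast)$: for every $f\in C(X)$ with $f\ge0$ one has $\limsup_{N\to\infty}\frac1N\sum_{n=0}^{N-1}f(T^n x_0)\le C\int_X f\,d\mu$. Given $\varepsilon>0$, use the approximation hypothesis to pick a finite linear combination $g=\sum_j c_j\chi_{V_j}$ of characteristic functions of members of $\mathcal{C}$ with $\|f-g\|_\infty<\varepsilon$. Passing to the atoms of the finite algebra generated by the $V_j$, and using that in a semi-algebra the complement of a member is a finite disjoint union of members (so each atom is itself a finite disjoint union of members of $\mathcal{C}$), one rewrites $g=\sum_k d_k\chi_{J_k}$ with the $J_k\in\mathcal{C}$ pairwise disjoint and covering $X$. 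Since the Birkhoff averages of the $\chi_{J_k}$ are nonnegative, we may discard the terms with $d_k<0$ and apply \eqref{eqn:1-1} term by term, giving $\limsup_N\frac1N\sum_{n<N}g(T^n x_0)\le C\sum_{k:\,d_k>0}d_k\,\mu(J_k)\le C\int_X(f+\varepsilon)\,d\mu$; combining with $\frac1N\sum_{n<N}f(T^nx_0)\le\frac1N\sum_{n<N}g(T^nx_0)+\varepsilon$ and letting $\varepsilon\to0$ yields $(\ast)$.

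With $(\ast)$ in hand, let $\mu_\infty$ be any weak-$*$ limit of a subsequence $\mu_{N_k}$. For nonnegative $f\in C(X)$ we get $\int_X f\,d\mu_\infty=\lim_k\frac1{N_k}\sum_{n<N_k}f(T^n x_0)\le C\int_X f\,d\mu$, so $\mu_\infty\le C\mu$ as measures, and in particular $\mu_\infty$ is absolutely continuous with respect to $\mu$. Now I invoke ergodicity: a $T$-invariant measure absolutely continuous with respect to an ergodic $T$-invariant measure $\mu$ must coincide with $\mu$, because its Radon--Nikodym derivative is $T$-invariant $\mu$-almost everywhere, hence $\mu$-a.e. constant, hence equal to $1$. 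Therefore $\mu_\infty=\mu$; as this holds for every limit point, $\mu_N\to\mu$ weak-$*$, i.e.\ $x_0$ is $T$-generic.

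The step I expect to be the main obstacle is the second one, the passage from \eqref{eqn:1-1} to $(\ast)$: one has to be careful that $\mathcal{C}$ is only a semi-algebra (so atoms of the generated algebra are finite disjoint unions of members, not members), and that the uniform approximation only controls the positive part of $g$, which is why the inequality necessarily carries the same global constant $C$ rather than a sharp one. The conceptual point that makes the purely \emph{one-sided} estimate \eqref{eqn:1-1} sufficient is that a crude upper bound with a fixed constant already forces every empirical limit to be absolutely continuous with respect to $\mu$, and ergodicity then promotes ``absolutely continuous and invariant'' all the way to ``equal to $\mu$''.
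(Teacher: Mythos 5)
First, note that the paper itself does not prove Theorem~\ref{thm:1-1}; it is quoted from Postnikov \cite{Pos}, so there is no internal proof to compare against, and your argument has to stand on its own. Most of it does: the reduction of genericity to showing that every weak-$*$ limit point $\mu_\infty$ of the empirical measures equals $\mu$, the upgrade of \eqref{eqn:1-1} to $\limsup_N\frac1N\sum_{n<N}f(T^nx_0)\le C\int f\,d\mu$ for continuous $f\ge0$ (your handling of the semi-algebra via atoms of the generated algebra and discarding negative coefficients is correct), the deduction $\mu_\infty\le C\mu$, and the ergodicity endgame (invariant plus absolutely continuous forces $\mu_\infty=\mu$) are all sound.

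The genuine gap is the step ``each limit point $\mu_\infty$ is $T$-invariant.'' You argue that $T_*\mu_{N_k}$ and $\mu_{N_k}$ differ by total variation $O(1/N_k)$, hence $T_*\mu_{N_k}\to\mu_\infty$, and conclude $T_*\mu_\infty=\mu_\infty$. That last implication needs the map $\nu\mapsto T_*\nu$ to be weak-$*$ continuous, i.e.\ it needs $T$ to be continuous, since one must pass $\int f\circ T\,d\mu_{N_k}\to\int f\circ T\,d\mu_\infty$ with $f\circ T$ merely measurable. The theorem as stated assumes no continuity, and in the paper it is applied precisely to discontinuous maps ($T(x)=\{\beta x\}$ and piecewise linear $S$), so this cannot be waved away; in fact, in the purely measure-theoretic generality of the statement the invariance claim is false (one may redefine an ergodic $T$ on a $\mu$-null countable set so that the orbit of $x_0$ equidistributes to a measure $\nu\le 2\mu$ with $\nu\ne\mu$, satisfying \eqref{eqn:1-1} but not genericity), which shows some regularity of $T$ relative to the topology must actually be used. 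The fix in the situations the paper needs is available from what you already proved: having established $\mu_\infty\le C\mu$ \emph{first}, and knowing that the discontinuity set $D$ of $T$ satisfies $\mu(D)=0$ (finitely many discontinuities, $\mu$ nonatomic), one gets $\mu_\infty(D)=0$ and may apply the portmanteau/mapping theorem for bounded functions that are $\mu_\infty$-a.e.\ continuous to $f\circ T$, recovering $T_*\mu_\infty=\mu_\infty$; alternatively one works in a symbolic model where the shift is continuous. As written, though, your proof silently assumes continuity of $T$ at exactly the point where the hypothesis is needed, and the order of your steps should be rearranged (domination before invariance) to close the argument.
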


We now introduce a criterion for generic point equivalence deduced from Theorem~\ref{thm:1-1}. 
\begin{proposition}\label{prop:2-2}
Let $([0,1],\B,\mu,T)$ and $([0,1],\B,\nu, S)$ be two ergodic measure preserving systems. Let 
$\mathcal{C}$ be a semi-algebra generating $\B$. 

Let $x_0\in [0,1]$ be a $T$-generic point. 
Suppose that there exist a positive integer $M$, a positive real number $C$, and a sequence $(k(n))_{n=0}^{\infty}$ 
of nonnegative integers satisfying the following: 
\begin{enumerate}
\item For any nonnegative integer $m$, we have 
\[\mbox{Card}\hspace{0.3mm}\{n\geq 0\mid k(n)=m\}\leq M,\]
where Card denotes the cardinality. 
\item For any $n\geq 0$, we have 
\[k(n)\leq M \cdot \max\{1,n\}.\]
\item Let $I\in \mathcal{C}$. Then there exists $\widetilde{I}=\cup_{i=1}^{r}\widetilde{I_i}$, where 
$\widetilde{I_1},\ldots,\widetilde{I_r}$ are subintervals of $[0,1]$, such that 
\[
\mu(\widetilde{I})\leq C \nu(I)
\]
and that, for any $n\geq 0$, 
\[\mbox{if }S^n x_0\in I\mbox{, then }T^{k(n)}x_0\in \widetilde{I}.\]
\end{enumerate}
Then $x_0$ is an $S$-generic point. 
\end{proposition}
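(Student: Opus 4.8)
The plan is to apply Theorem~\ref{thm:1-1} to the system $([0,1],\B,\nu,S)$ with the point $x_0$ and the semi-algebra $\mathcal{C}$, so it suffices to verify the Pyatetskii-Shapiro bound \eqref{eqn:1-1} for $S$: that there is a constant $C'$ with
\[
\limsup_{N\to\infty}\frac{1}{N}\sum_{n=0}^{N-1}\chi_I(S^n x_0)\le C'\,\nu(I)
\]
for every $I\in\mathcal{C}$. The density hypothesis on $C(X)$ being a sup-norm limit set of linear combinations of characteristic functions from $\mathcal{C}$ is inherited unchanged, so the only thing to check is this Cesàro upper bound.

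Fix $I\in\mathcal{C}$ and take $\widetilde I=\bigcup_{i=1}^r\widetilde I_i$ as in hypothesis~(3). The key step is the comparison of orbit counts: by (3), whenever $S^n x_0\in I$ we have $T^{k(n)}x_0\in\widetilde I$, hence
\[
\sum_{n=0}^{N-1}\chi_I(S^n x_0)\le \sum_{n=0}^{N-1}\chi_{\widetilde I}\bigl(T^{k(n)}x_0\bigr)
=\sum_{n=0}^{N-1}\sum_{m\ge 0}[\,k(n)=m\,]\,\chi_{\widetilde I}(T^m x_0).
\]
Now I would swap the order of summation and use the two combinatorial constraints on $(k(n))$. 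By (1) each value $m$ is attained by at most $M$ indices $n$, so each term $\chi_{\widetilde I}(T^m x_0)$ is counted at most $M$ times; by (2), when $n\le N-1$ we have $k(n)\le M\max\{1,n\}\le MN$, so only the indices $m$ in the range $0\le m\le MN$ can contribute. Therefore
\[
\sum_{n=0}^{N-1}\chi_I(S^n x_0)\le M\sum_{m=0}^{MN}\chi_{\widetilde I}(T^m x_0).
\]
Dividing by $N$ and letting $N\to\infty$, since $x_0$ is $T$-generic and $\widetilde I$ is a finite union of intervals (so $\chi_{\widetilde I}$ is Riemann integrable, a uniform limit of continuous functions from both sides, and the Birkhoff-type average of $x_0$ along $T$ exists and equals $\mu(\widetilde I)$), we get
\[
\limsup_{N\to\infty}\frac1N\sum_{n=0}^{N-1}\chi_I(S^n x_0)
\le M\cdot M\cdot\lim_{N\to\infty}\frac{1}{MN}\sum_{m=0}^{MN}\chi_{\widetilde I}(T^m x_0)
= M^2\,\mu(\widetilde I)\le M^2 C\,\nu(I).
\]

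So \eqref{eqn:1-1} holds for $S$ with constant $C'=M^2C$, and Theorem~\ref{thm:1-1} yields that $x_0$ is $S$-generic. The one technical point I expect to need care is the passage from $T$-genericity (defined via continuous test functions) to the statement $\frac1N\sum_{m=0}^{N-1}\chi_{\widetilde I}(T^m x_0)\to\mu(\widetilde I)$ for the non-continuous indicator $\chi_{\widetilde I}$: this should follow by sandwiching $\chi_{\widetilde I}$ between continuous functions agreeing with it outside arbitrarily small neighborhoods of the finitely many endpoints of the $\widetilde I_i$, using $\mu$ has no atoms (which holds since $\mu$ is $T$-invariant and ergodic for a $T$ with a nontrivial orbit structure, or more directly because such endpoint sets have measure zero under the hypotheses on $\mu$), and then taking limits. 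The reindexing of the sum along the subsequence $MN$ rather than $N$ only changes the limit by the harmless factor $M$ already accounted for above.
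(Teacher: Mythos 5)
Your argument is correct and is essentially the paper's own proof: you bound the $S$-orbit visit counts to $I$ by $T$-orbit visit counts to $\widetilde I$ via hypothesis (3), control multiplicity with (1) and the summation range with (2) to get the constant $M^2$, and then invoke $T$-genericity and Theorem~\ref{thm:1-1} with constant $M^2C$, exactly as in the paper. The technical point you flag about passing from continuous test functions to the indicator $\chi_{\widetilde I}$ is glossed over in the paper as well (it is harmless in the intended application, where $\mu$ is absolutely continuous), so your proposal matches the published argument in both substance and level of rigor.
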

\begin{proof}
Let $I\in\mathcal{C}$ and $N$ be an integer greater than 1. Put 
\begin{equation*}
\rho(N):=\max\{k(n)\mid 0\leq n\leq N-1\} \leq M(N-1)\leq MN-1.
\end{equation*}
Then, we see
\begin{align*}
\frac{1}{N}\sum_{n=0}^{N-1}\chi_{I} (S^n x_0)&=
\frac{1}{N}\sum_{m=0}^{\rho(N)}\sum_{\substack{k(n)=m \\0\leq n\leq N-1}}\chi_{I} (S^n x_0)\\
&\leq 
\frac{1}{N}\sum_{m=0}^{\rho(N)}\sum_{\substack{k(n)=m \\0\leq n\leq N-1}}
\chi_{\widetilde{I}} (T^m x_0)\\
&\leq 
\frac{M}{N}\sum_{m=0}^{\rho(N)}
\chi_{\widetilde{I}} (T^m x_0)
\leq 
M^2\cdot 
\frac{1}{MN}\sum_{m=0}^{MN-1}
\chi_{\widetilde{I}} (T^m x_0).
\end{align*}
Since $x_0$ is $T$-generic, we get 
\[
\limsup_{N\to\infty}\frac{1}{N}\sum_{n=0}^{N-1}\chi_{I} (S^n x_0)
\leq M^2\mu(\widetilde{I})\leq M^2C \nu(I),
\]
which implies by Theorem \ref{thm:1-1} that $x_0$ is $S$-generic. 
\end{proof}
\begin{rem}
Proposition~\ref{prop:2-2} can be generalized for two ergodic measure preserving systems
$([0,1]^d,\B,\mu,T)$ and $([0,1]^d,\B,\nu, S)$.
\end{rem}

\section{Pisot slope condition}
Let $\N$ be the set of positive integers. 
Given $\beta>1$, let $T (x) = \{ \beta x \}$ be a map on $[0,1]$. 
Then $T$ is ergodic with respect to a unique 
absolutely continuous invariant measure $\mu_\beta$ whose 
density is bounded and away from 0,  (see \cite{Par}). 
Let $[0,1]=\cup_{i=1}^{\ell} J_i$ be a finite partition
of $[0,1]$ into subintervals\footnote{Subinterval $J_i$ can be closed, open or semi-open, even a singleton.}.
Let $S : [0,1] \to [0,1]$ be a transformation given by
\begin{align*}
S(x) = \epsilon_i \beta^{m_i} x + b_i, \qquad \text{ for } \ x \in J_i,
\end{align*}
where $\epsilon_i \in \{ -1, 1\}$, $m_i \in \mathbb N$ and $b_i \in \Q(\beta)$ for $1 \le i \le \ell$. 

For any $x\in [0,1]$ and $h\geq 0$, let $i(h)=i(x_0;h)$ be defined by $ S^h (x) \in J_{i(h)}$.
Then we have, for any $n\geq 0$, 
\begin{align}\label{yyy}
S^n(x)=  \left(\prod_{h=0}^{n-1} \epsilon_{i(h)}  \right)
\beta^{\sum_{h=0}^{n-1} m_{i(h)}}  x + \sum_{j=0}^{n-1} \left(\prod_{h >j}^{n-1} \epsilon_{i(h)} \right) \beta^{\sum_{h >j}^{n-1} m_{i(h)}}  b_{i(j)}.
\end{align}
Put 
\begin{align}\label{theta}
\theta_n(x_0):=\sum_{h=0}^{n-1} m_{i(h)}, 
\end{align}
where $\beta^{\theta_n(x_0)}$ gives the absolute value of the slope of $S^n$ at $x_0$.
Hereafter, unless it is stated explicitly,  we assume that $\beta$ is a Pisot number.

A subset $Y$ of $\R$ is {\it uniformly discrete} if there exists a positive constant $R$ such that for  any two distinct points $y, y' \in Y$, we have $|y-y'|>R$.

\begin{lem}\label{lem:Pisot}
Let $E$ be a finite subset of $\Q(\beta)$ and put
$$F_E := \left \{\left. \sum_{j=0}^r d_j \beta^j \, \right| \, d_j \in E, \, r = 0, 1, 2, \dots \right \}.
$$
Then $F_E$ is uniformly discrete.
\end{lem}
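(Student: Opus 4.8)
The plan is to reduce the statement to a finiteness/uniform-discreteness fact about the conjugates of elements of $F_E$ under the embeddings of $\Q(\beta)$ into its Galois closure, exploiting that $\beta$ is Pisot. First I would choose a common denominator: since $E$ is finite and $E \subset \Q(\beta)$, there is a nonzero rational integer $q$ (in fact a positive integer) such that $qE \subset \Z[\beta]$, and then $qF_E \subset \Z[\beta]$. It suffices to show $qF_E$ is uniformly discrete, since scaling by the fixed constant $q$ only scales the separation constant. So I may as well assume from the start that $E \subset \Z[\beta]$ and show $F_E$ is uniformly discrete; write every $d_j$ as a polynomial in $\beta$ with bounded integer coefficients (bounded because $E$ is finite).

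Next I would set up the conjugate embeddings. Let $\beta = \beta^{(1)}, \beta^{(2)}, \dots, \beta^{(s)}$ be the full set of algebraic conjugates of $\beta$ (real and complex), so the Galois conjugates other than $\beta$ all satisfy $|\beta^{(t)}| < 1$ by the Pisot hypothesis. Each $\alpha \in \Z[\beta]$ has conjugates $\alpha^{(t)} \in \C$ obtained by applying $\beta \mapsto \beta^{(t)}$ to a polynomial expression for $\alpha$. The key observation is that an element $\alpha = \sum_{j=0}^r d_j \beta^j \in F_E$ has, for each $t \geq 2$, a conjugate $\alpha^{(t)} = \sum_{j=0}^r d_j^{(t)} (\beta^{(t)})^j$, and since the $d_j$ range over a finite set the quantities $d_j^{(t)}$ are uniformly bounded, say by a constant $B$; because $|\beta^{(t)}| < 1$, the geometric series gives $|\alpha^{(t)}| \leq B/(1 - |\beta^{(t)}|) =: B_t$ uniformly in $r$ and in the choice of digits. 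Thus every element of $F_E$ lies in a region of the "geometric" space $\prod_{t} K_t$ (where $K_1 = \R$ is unbounded but $K_t$ for $t \geq 2$ is the disc of radius $B_t$) whose projection to all but the first coordinate is bounded.

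Now I would invoke the standard discreteness property of $\Z[\beta]$ viewed inside $\R \times \prod_{t\geq 2}\C$ via the diagonal embedding $\alpha \mapsto (\alpha^{(1)}, \dots, \alpha^{(s)})$: this image is a discrete (indeed, a lattice-like, finitely generated) subgroup, so it meets any bounded region of $\R \times \prod_{t \geq 2}\C$ in a finite set. Suppose for contradiction $F_E$ is not uniformly discrete; then there are sequences $\alpha_k \neq \alpha_k'$ in $F_E$ with $|\alpha_k - \alpha_k'| \to 0$. Their differences $\delta_k = \alpha_k - \alpha_k'$ lie in $\Z[\beta]$, have first coordinate $\delta_k^{(1)} \to 0$, and have all other coordinates bounded: $|\delta_k^{(t)}| \leq 2B_t$ for $t \geq 2$. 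Hence the $\delta_k$ lie in a fixed bounded region of $\R \times \prod_{t\geq 2}\C$, so they take only finitely many values by discreteness; since $\delta_k^{(1)} \to 0$ and $0$ is the only element of $\Z[\beta]$ with vanishing first coordinate (the embedding is injective), we get $\delta_k = 0$ for large $k$, contradicting $\alpha_k \neq \alpha_k'$. This proves uniform discreteness.

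The main obstacle is making the discreteness of the diagonal embedding of $\Z[\beta]$ precise and citing it cleanly: one must argue that $\Z[\beta]$, as a finitely generated $\Z$-module of rank equal to $\deg \beta$, embeds as a discrete subgroup of $\R \times \prod_{t \geq 2} \C \cong \R^{\deg\beta}$ under the conjugate map, which is the classical fact underlying the Minkowski embedding of a number field (here restricted to the order $\Z[\beta]$). Once that is in hand, the bounded-region-meets-discrete-set-in-finite-set step is immediate, and the rest is bookkeeping about the uniform bound $B$ on digits and the contraction $|\beta^{(t)}| < 1$.
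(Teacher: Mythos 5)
Your proof is correct, but it takes a genuinely different route from the paper's. The paper argues quantitatively: after assuming $0\in E$ and choosing an integer $L$ with $E\subset \frac1L\Z[\beta]$, it notes that for a nonzero $\alpha=\sum_j d_j\beta^j\in F_E$ the product $L^d\prod_i \phi_i(\alpha)$ over all Galois embeddings is a nonzero rational integer, hence of absolute value at least $1$; bounding each non-dominant factor by the geometric series $A_i/(1-|\beta^{(i)}|)$ (the same Pisot estimate you use) then gives an explicit positive lower bound for $|\alpha|$, so $0$ is isolated in $F_E$, and uniform discreteness follows from the identity $F_E-F_E=F_{E-E}$. You instead argue qualitatively: reduce to $E\subset\Z[\beta]$ by clearing denominators, bound the conjugates of elements of $F_E$ in the same way, and invoke discreteness of the conjugate (Minkowski-type) embedding of $\Z[\beta]$ to conclude that nonzero differences which are small at the real place but bounded at the other places can take only finitely many values, forcing them to vanish eventually, a contradiction. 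Both arguments hinge on the same Pisot contraction estimate; the paper's norm argument is self-contained and yields an explicit separation constant, while yours trades that for the standard lattice fact (which the paper itself uses later, in the proof that $S$-orbits of points of $\Q(\beta)$ are eventually periodic) and handles differences directly without introducing $F_{E-E}$. One small inaccuracy: $\R\times\prod_{t\ge 2}\C$ is not isomorphic to $\R^{\deg\beta}$ as a real vector space unless you keep only one embedding from each complex-conjugate pair; this is harmless, since the image of $\Z[\beta]$ under all embeddings is still discrete and meets every bounded region in a finite set.
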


This follows from a standard discussion (e.g. Garcia \cite{Gar}), but we show it for completeness.

\begin{proof}
Without loss of generality, we may assume that $0\in E$. 
We claim that $0$ is not an accumulation point of $F_E$. 
In fact, let $\beta^{(j)}$ be the Galois conjugates of $\beta$ for $j = 1,\dots, d$
with $\beta^{(1)}=\beta$. Take a positive integer $L$ such that $E \subset \frac 1L \Z[\beta]$. 
Suppose that $0\ne \sum_{j=0}^r d_j \beta^j\in F_E$. 
Considering the image of the Galois conjugate map $\phi_i$ which sends $\beta$ to $\beta^{(i)}$, we obtain
$$
\left| L^d \prod_{i=1}^{d} \sum_{j=0}^{r} \phi_i(d_j) (\beta^{(i)})^j\right|\ge 1
$$
because the product must be an integer. 
Since $\beta$ is a Pisot number, we obtain
$$
\left|\sum_{j=0}^r d_j \beta^j\right|\ge \frac 1{L^d}\prod_{i=2}^d \left(\frac {A_i}{1-|\beta^{(i)}|}\right)^{-1},
$$
where $A_i=\max \{|\phi_i(d)|\ |\  d \in E\}$ is a positive constant because $E$ is a finite set. 
This shows the claim. 
Note that $F_E-F_E=F_{E-E}$ by $0\in E$. 
By the same proof replacing $E$ by $E-E$, we obtain the assertion.
\end{proof}

If $S$ and $T$ are generic point equivalent, then the set of non-generic points of $T$ and that of $S$ 
are identical. Thus we may expect that eventually periodic orbits of $T$ and those of $S$ coincide.
Next theorem confirms this expectation that $T$ and $S$ share the same set of eventually periodic orbits.

\begin{theorem}
The orbit $S^n(x)$ for $n=0,1,\dots$ is eventually periodic if and only if $x\in \Q(\beta)$.
\end{theorem}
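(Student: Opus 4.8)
The plan is to prove the two implications separately, with the "$\Leftarrow$" direction being the routine one and the "$\Rightarrow$" direction requiring the Pisot structure captured by Lemma~\ref{lem:Pisot}. For "$\Leftarrow$", suppose $x\in\Q(\beta)$. The slopes of $S$ are $\pm\beta^{m_i}$ and the intercepts $b_i$ lie in $\Q(\beta)$, so formula~\eqref{yyy} shows that $S^n(x)\in\Q(\beta)$ for every $n$; in fact one checks from \eqref{yyy} that there is a common denominator, i.e.\ there is a positive integer $L$ (depending only on $x$, the $b_i$, and $\beta$) with $S^n(x)\in\frac1L\Z[\beta]$ for all $n$. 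I would then argue that $\{S^n(x)\}_{n\ge0}$ is a bounded subset of $\frac1L\Z[\beta]\cap[0,1]$; by Lemma~\ref{lem:Pisot} applied to a suitable finite set $E$ (so that $\frac1L\Z[\beta]\cap[0,1]$ lies in a uniformly discrete set $F_E$), this orbit is finite. A bounded sequence taking finitely many values must revisit some value, and since $S$ is a (single-valued) map, once $S^a(x)=S^b(x)$ with $a<b$ the orbit is periodic from index $a$ on; hence the orbit is eventually periodic.

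For "$\Rightarrow$", suppose the orbit of $x$ is eventually periodic, say $S^{a+p}(x)=S^a(x)$ with $p\ge1$. Write $y=S^a(x)$, a fixed point of $S^p$. Applying \eqref{yyy} to $S^p$ at the point $y$ gives a relation of the form $y=\varepsilon\beta^{\theta}y+c$, where $\varepsilon=\prod\epsilon_{i(h)}\in\{\pm1\}$, $\theta=\theta_p(y)=\sum m_{i(h)}\ge1$, and $c=\sum_{j}(\prod_{h>j}\epsilon_{i(h)})\beta^{\sum_{h>j}m_{i(h)}}b_{i(j)}\in\Q(\beta)$. Since $\theta\ge1$ we have $\varepsilon\beta^{\theta}\ne1$ (the only way $\varepsilon\beta^\theta=1$ would force $\beta^\theta=\pm1$, impossible for $\beta>1$), so $y=c/(1-\varepsilon\beta^\theta)\in\Q(\beta)$. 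Then I must propagate this back to $x$: the point $x$ satisfies $S^a(x)=y$, and from \eqref{yyy} applied to $S^a$ at $x$ we get $y=\varepsilon'\beta^{\theta_a(x)}x+c'$ with $\varepsilon'\in\{\pm1\}$ and $c'\in\Q(\beta)$; since $\beta^{\theta_a(x)}\ne0$ we can solve $x=(y-c')/(\varepsilon'\beta^{\theta_a(x)})\in\Q(\beta)$.

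The main obstacle is the "$\Leftarrow$" direction's finiteness argument, specifically making precise that the orbit lies in a genuinely uniformly discrete set so that boundedness forces finiteness. The subtlety is that the number of summands in the intercept term of \eqref{yyy} grows with $n$, and the exponents $\sum_{h>j}^{n-1}m_{i(h)}$ range over an unbounded set, so the orbit points are not a priori of the fixed shape $\sum_{j=0}^r d_j\beta^j$ with $d_j$ from a \emph{finite} set $E$ — one must instead observe that $S^n(x)$ lies in the \emph{fixed} lattice-like set $\frac1L\Z[\beta]$, choose $E$ to be a finite generating set of residues making $\frac1L\Z[\beta]$ a subset of $F_E$, and invoke Lemma~\ref{lem:Pisot} to get uniform discreteness of $F_E$; then $F_E\cap[0,1]$ is finite. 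I expect this to be the only place where care is needed; the remaining steps are direct manipulations of \eqref{yyy}.
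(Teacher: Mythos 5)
Your ``$\Rightarrow$'' direction is fine (and matches the paper's terse assertion): from the periodicity relation and \eqref{yyy} you get $y=\varepsilon\beta^{\theta}y+c$ with $\theta\ge 1$, so $y\in\Q(\beta)$, and back-solving through $S^a$ gives $x\in\Q(\beta)$. The genuine gap is in the finiteness step of ``$\Leftarrow$''. You propose to ``choose $E$ to be a finite generating set of residues making $\frac1L\Z[\beta]$ a subset of $F_E$'' and then invoke Lemma~\ref{lem:Pisot}. No such finite $E$ exists when $\beta$ is a non-integer Pisot number: $\Z[\beta]\supset\Z+\Z\beta$ is dense in $\R$ because $\beta$ is irrational, whereas $F_E$ is uniformly discrete by Lemma~\ref{lem:Pisot}, and a dense set cannot be contained in a uniformly discrete one. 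In particular $\frac1L\Z[\beta]\cap[0,1]$ is infinite, so membership in this ``lattice-like'' set together with boundedness in $[0,1]$ does \emph{not} force the orbit to be finite. The missing ingredient is control of the Galois conjugates, which is exactly how the paper argues: applying each embedding $\phi_i$ to \eqref{yyy} and using $|\phi_i(\beta)|<1$ (the Pisot property) shows $|\phi_i(LS^n(x))|\le C_i$ uniformly in $n$; since the Minkowski embedding realizes $\Z[\beta]$ as a lattice in $\R^d$, the points $LS^n(x)$ lie in a bounded region of that lattice and hence take only finitely many values, giving eventual periodicity.

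Ironically, the route you explicitly dismissed actually works and is closer to Lemma~\ref{lem:Pisot}: in \eqref{yyy} the exponents attached to $x$ and to the various $b_{i(j)}$ are pairwise distinct (consecutive differences are $m_{i(j+1)}\ge 1$, and the $x$-term carries the strictly largest exponent $\theta_n$), so each $S^n(x)$ is literally of the shape $\sum_{j=0}^{r}d_j\beta^j$ with every $d_j$ drawn from the \emph{fixed finite} set $E=\{0,\pm x,\pm b_1,\dots,\pm b_\ell\}\subset\Q(\beta)$. Then Lemma~\ref{lem:Pisot} gives uniform discreteness of $F_E$, and boundedness of the orbit in $[0,1]$ gives finiteness. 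Either repair (conjugate bounds plus the lattice structure, as in the paper, or the distinct-exponents observation feeding Lemma~\ref{lem:Pisot}) closes the gap; as written, your key containment claim is false for every non-integer Pisot $\beta$, which is precisely the case of interest.
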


\begin{proof}
Because $b_i \in \Q(\beta)$, 
every eventually periodic point of $S$ belongs to $\Q(\beta)$.
Assume that $x\in \Q(\beta)$. 
Take a positive integer $L$ such that $Lx$ and $Lb_j$ are in $\Z[\beta]$.
Then for all $n \ge 0$ we have $L S^n(x)\in \Z[\beta]$ by (\ref{yyy}).
Since $\beta$ is a Pisot number, for each $i=1,\dots,d$ there is a constant $C_i>0$ such that
$|\phi_i(L S^n(x))|\le C_i$  for all $n \ge 0$.
Since the image of the Minkowski embedding of $\Z[\beta]$ forms a lattice in $\R^d$, the orbit is eventually periodic. 
\end{proof}

Now we are in position to state our main theorem.

\begin{theorem}\label{thmTS}
Let $T$, $S$ be the maps defined above. 
Suppose that $S$ preserves a probability measure $\nu$, which is ergodic and 
absolutely continuous with respect to the Lebesgue measure $\lambda$. 
Moreover, assume that there exists a positive constant $c$ satisfying 
\begin{equation}\label{Positive}
c^{-1}\lambda(E)\leq \nu(E)\leq c \lambda(E)
\end{equation}
for any Borel set $E\subset [0,1]$.  
Then $T$ and $S$ are generic point equivalent.
\end{theorem}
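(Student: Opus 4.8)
The plan is to apply Proposition~\ref{prop:2-2} in both directions. Taking $x_0$ to be a $T$-generic point, the proposition (with $\mu=\mu_\beta$ and $\nu$ as given) will yield that $x_0$ is $S$-generic; applying it again with the roles of $(\mu,T)$ and $(\nu,S)$ interchanged, an $S$-generic $x_0$ will be shown to be $T$-generic. For the semi-algebra $\mathcal{C}$ generating $\B$ I would take all subintervals of $[0,1]$ of length smaller than a constant $\rho_0>0$ to be fixed below (together with $\emptyset$ and $[0,1]$, closed up to a semi-algebra); finite linear combinations of their characteristic functions are uniformly dense in $C[0,1]$. Since the density of $\mu_\beta$ is bounded and bounded away from $0$ and, by \eqref{Positive}, the same holds for $\nu$, the inequality $\mu(\widetilde I)\le C\nu(I)$ required in condition (3) reduces to a bound of the form $\lambda(\widetilde I)\le C'\lambda(I)$ (and symmetrically in the other direction); so it suffices to produce $\widetilde I$ with $\lambda(\widetilde I)=O(\lambda(I))$, with an implied constant independent of $n$ and of $I\in\mathcal{C}$.

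For the implication $T$-generic $\Rightarrow$ $S$-generic I would set $k(n):=\theta_n(x_0)$. Conditions (1) and (2) of Proposition~\ref{prop:2-2} are immediate: $(\theta_n)_n$ is strictly increasing with steps $m_{i(n)}\in\{1,\dots,\max_i m_i\}$, so no value is attained more than $\max_i m_i$ times and $\theta_n\le n\max_i m_i$. Condition (3) is the substance. From \eqref{yyy} and \eqref{theta} we have $S^n(x_0)=\varepsilon_n\beta^{\theta_n}x_0+c_n$ with $\varepsilon_n\in\{\pm1\}$ and $c_n$ the displayed sum of terms $\pm\beta^{e}b_{i(j)}$; since each $m_{i(h)}\ge1$ the exponents $e$ occurring for fixed $n$ are pairwise distinct, so $c_n\in F_E$ with the finite set $E=\{0,\pm b_1,\dots,\pm b_\ell\}\subset\Q(\beta)$, and after replacing $E$ by $E\cup(-E)$ we also get $-\varepsilon_n c_n\in F_E$. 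By Lemma~\ref{lem:Pisot}, $F_E$ is uniformly discrete; let $R>0$ be a separation constant and put $\rho_0:=R/4$. Rearranging, $\beta^{\theta_n}x_0=\varepsilon_n\bigl(S^n(x_0)-c_n\bigr)$, hence
\[
T^{\theta_n}(x_0)=\{\beta^{\theta_n}x_0\}=\bigl\{\varepsilon_n S^n(x_0)-\varepsilon_n c_n\bigr\}.
\]
Thus if $S^n(x_0)\in I$ with $|I|<R/4$, then $\beta^{\theta_n}x_0$ lies within distance $|I|$ of the point $-\varepsilon_n c_n$ of the $R$-separated set $F_E$, so $T^{\theta_n}(x_0)$ lies within distance $|I|$ of the fractional part of that point, i.e.\ of an element of $F_E+\Z$. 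One then takes $\widetilde I$ to be the union of the short intervals $\varepsilon\,(I-c)$, $\varepsilon\in\{\pm1\}$, intersected with $[0,1]$, where $c$ ranges over the elements of $F_E$ that can occur as $-\varepsilon_n c_n$ with $S^n(x_0)\in I$, and the point is to bound the total Lebesgue measure of this family by $O(|I|)$.

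The opposite direction is analogous. Given $T^n(x_0)\in I$, let $k(n)$ be the largest index with $\theta_{k(n)}\le n$, so $0\le n-\theta_{k(n)}<\max_i m_i$ and conditions (1)--(2) again hold; from $\beta^n x_0=\beta^{\,n-\theta_{k(n)}}\varepsilon_{k(n)}\bigl(S^{k(n)}(x_0)-c_{k(n)}\bigr)$ one sees that $S^{k(n)}(x_0)$ lands, up to a bounded power of $\beta$ and a translation in $F_{E'}$ with $E':=\bigcup_{0\le s<\max_i m_i}\beta^{s}E$, in the image of $I$; $F_{E'}$ is again uniformly discrete by Lemma~\ref{lem:Pisot}, and one argues as before. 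In both directions, feeding the resulting estimate into Proposition~\ref{prop:2-2} and then into the Pyatetskii--Shapiro criterion (Theorem~\ref{thm:1-1}) gives genericity.

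The main obstacle is precisely the measure bound in condition (3): one must show that the fractional parts of the constants $-\varepsilon_n c_n$ that actually occur — equivalently, the carries $\lfloor\beta^{\theta_n}x_0\rfloor$ that arise when $\beta^{\theta_n}x_0$ is forced into a short subinterval of $[0,1]+\Z$ — are confined to a family of intervals whose total length is a bounded multiple of $|I|$, uniformly in $n$. This is where the Pisot hypothesis must be used beyond bare uniform discreteness: elements of $F_E$ have all Galois conjugates bounded (as in the proof of Lemma~\ref{lem:Pisot}), so the contributions $\beta^{e}b_{i(j)}$ with $e$ large are within $O(\rho^{e})$ of $\tfrac1L\Z$ for a fixed $\rho<1$, and together with the constraint $S^n(x_0)\in[0,1]$ and the separation of $F_E$ this should pin down both the admissible carries and the relevant residues modulo $1$. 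Everything else — conditions (1)--(2), the reduction of the measure comparison to $\lambda$, the semi-algebra and density properties of $\mathcal{C}$, and the passage through Theorems~\ref{thm:1-1} and \ref{prop:2-2} — is routine.
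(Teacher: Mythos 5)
Your skeleton (Proposition~\ref{prop:2-2} in both directions, $k(n)=\theta_n(x_0)$ one way and the largest $k$ with $\theta_k\le n$ the other way, Lemma~\ref{lem:Pisot} as the Pisot input, reduction of the measure comparison to $\lambda$ via \eqref{Positive}) matches the paper, but the proof is not complete: the entire substance of the theorem is the uniform bound in condition (3), and you explicitly leave it open (``the main obstacle is precisely the measure bound in condition (3) \dots this should pin down both the admissible carries and the relevant residues modulo $1$''). As you set it up, the obstacle is real: the constants $c_n$ in \eqref{yyy} grow like $\beta^{\theta_n}$, so the collection of points $-\varepsilon_n c_n$ whose fractional parts you would have to track is infinite, and nothing in your argument prevents their fractional parts from spreading over $[0,1]$; your proposed fix via Galois-conjugate decay (``contributions $\beta^e b_{i(j)}$ with $e$ large are within $O(\rho^e)$ of $\tfrac1L\Z$'') is only sketched and is not obviously sufficient. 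Restricting $\mathcal{C}$ to intervals of length $<R/4$ does not help with this, since the issue is the number of admissible translates, not the size of $I$.

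The paper closes exactly this gap by a different bookkeeping of the carries. It enlarges the digit set to $E=\{\pm b_1,\dots,\pm b_k\}\cup\{0,1,\dots,\lfloor\beta\rfloor\}$, so that not only $S^n(x_0)=\epsilon(\beta^{k(n)}x_0-b)$ but also $T^{k(n)}(x_0)=\beta^{k(n)}x_0-b'$ with \emph{both} $b,b'\in F_E$ (the accumulated $T$-carries are themselves an element of $F_E$ because the $\beta$-expansion digits lie in $E$); no fractional-part operation is needed. Then the difference $b-b'=T^{k(n)}(x_0)-\epsilon S^n(x_0)$ lies in $F_{E-E}\cap[-1,2]$, which is \emph{finite} by uniform discreteness (Lemma~\ref{lem:Pisot}) plus boundedness, uniformly in $n$ and $I$. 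Hence $\tilde I$ can be taken as the finite union $\bigcup_{t\in F\cap[-1,2]}\bigl((I+t)\cup(-I+t)\bigr)\cap[0,1]$, giving $\mu_\beta(\tilde I)\le 2c^2\,\mathrm{Card}(F\cap[-1,2])\,\nu(I)$; the reverse direction is the same with $T^{-j}(I)$, $0\le j<M$, in place of $I$. In other words, the Pisot hypothesis is used only through bare uniform discreteness of $F_{E-E}$ --- but applied to the difference of the two orbit constants, which is bounded because both orbits stay in $[0,1]$ --- rather than through the finer conjugate estimates you appeal to. Without this (or an equivalent) argument, your proposal does not establish condition (3), and so does not prove the theorem.
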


The condition \eqref{Positive} implies that $\nu$ and $\lambda$ are equivalent. 
Kowalski \cite{Kow} showed under ergodicity of $S$ that the converse holds as well in this setting.

\begin{proof}
If necessary, changing the constant $c$, we may assume that 
$$c^{-1} \lambda(E)\leq \mu_{\beta}(E)\leq c \lambda (E)$$
for any Borel set $E\subset [0,1]$ (Parry \cite{Par}, 
Ito-Takahashi \cite{IT}).
Let $$E = \{\pm b_1, \pm b_2 , \dots , \pm b_k \} \cup \{0, 1, 2, \dots , \lfloor \beta \rfloor \}.$$
Putting $F : = F_{E-E} = F_E - F_E$, we get by Lemma~\ref{lem:Pisot} that $F$ is uniformly discrete.

First we assume that $x_0 \in [0,1]$ is a $T$-generic point.
For each $n \ge 0$,  let $k(n):=\theta_n(x_0)$ be defined by \eqref{theta}. 
Then we see that 
$$S^{n}(x_0)=\epsilon(\beta^{k(n)} x_0 -b), \qquad T^{k(n)}(x_0)=\beta^{k(n)}  x_0 -b',$$
for some  $\epsilon\in\{1,-1\}$ and $b,b'\in F_E$. 
We now verify that $(k(n))_{n=0}^{\infty}$ and $M:=\max\{m_1,\ldots,m_{\ell}\}$ satisfy the assumptions of 
Proposition~\ref{prop:2-2}, where $\mathcal{C}$ is the set of subintervals of $[0,1]$. 
The first and the second assumptions are clear by 
$1\leq k(n+1)-k(n)\leq M$ for any $n\geq 0$. For any interval $I$, put 
$$\tilde I =\left( \bigcup_{t \in F\cap [-1,2]} \Big( \big( I + t \big) \cup \big( - I + t \big) \Big)\right)\cap [0,1].$$
Then we have 
$$
\mu_{\beta}( \tilde I ) \le 2 c \, \textrm{Card} (F\cap[-1,2]) \lambda(I)
\le 2 c^2 \, \textrm{Card} (F\cap[-1,2]) \nu(I).
$$
We now assume for $n\ge 0$ that $S^n(x_0)\in I$. Noting that 
$$
b-b'=T^{k(n)}(x_0)-\epsilon S^n(x_0)\in [-1,2]\cap F,
$$
we obtain 
$$
T^{k(n)}(x_0)=\epsilon S^n(x_0)+(b-b')\in \tilde I.
$$
Hence, $x_0$ is $S$-generic by Proposition~\ref{prop:2-2}.

We prove the other direction. Let $x_0 \in [0,1]$  be an $S$-generic point.
For each $n\geq 0$, we define $k(n)$ by 
$$
k(n) := \max \{ k \, | \,  \theta_k(x_0) \le \beta^n \}.
$$
For any $h \geq 0$, we see that $k(n)= h$ if and only if 
\begin{align}\label{xxx}
\theta_{h}(x_0)\leq n<\theta_{ h+1}(x_0)=\theta_{h}(x_0)+m_{i(h)}.
\end{align}
Moreover, we see for any $n\geq 0$ that 
\begin{align}\label{xxxyyy}
\theta_{k(n)}(x_0)= \beta^{n-j},
\end{align}
for some $0\leq j<M=\max\{m_1,\ldots,m_\ell\}$. In what follows, we show 
that $(k(n))_{n=0}^{\infty}$ and $M$ satisfy the assumptions of 
Proposition~\ref{prop:2-2}. The first and the second assumptions are clear by \eqref{xxx} and 
$0\leq k(n+1)-k(n)\leq 1$ for any $n\geq 0$.
For any interval $I\subset [0,1]$, put 
$$\tilde I 
= \left(\bigcup_{j=0}^{M-1} \bigcup_{t \in F\cap[-1,2]} \Big( \big( T^{-j}(I) + t \big) \cup \big( - T^{-j} (I) + t \big) \Big)\right)
\cap [0,1].$$
Then we get 
\begin{align*}
\nu ( \tilde I ) 
&\le 2 c \, \textrm{Card} ( F \cap[-1,2] ) M \max_{0\le j\le M-1}\lambda \left(T^{-j}(I)\right)\\
&\le 2 c^2 \, \textrm{Card} ( F \cap[-1,2] ) M \mu_{\beta} (I).
\end{align*}
Suppose for $n\geq 0$ that $T^n(x_0)\in I$. Let $j$ be defined by \eqref{xxxyyy}. 
In the same way as the former part of the proof of Theorem~\ref{thmTS}, we get 
$$\epsilon T^{n-j} ( x_0) + S^{k(n)} (x_0)\in F \cap[-1,2],$$
for some $\epsilon\in \{1,-1\}$. Therefore, we deduce that 
\begin{equation*}
S^{k(n)} (x_0)=-\epsilon T^{n-j} ( x_0)+\left(\epsilon T^{n-j} ( x_0) + S^{k(n)} (x_0)\right)\in \tilde I. \qedhere
\end{equation*}
\end{proof}

\begin{rem}
It is natural to assume that all slopes in modulus are certain powers of a fixed number, 
since we can not expect generic point equivalence for multiplicatively independent slopes. 
Indeed, if $a$ and $b$ are multiplicatively independent positive integers, then 
Schmidt \cite{WSch} showed that there are uncountably many $a$-normal numbers which are not $b$-normal. 
Moreover, Pollington \cite{Poll} calculated the Hausdorff dimension of such numbers. 
Consider a partition of the set $\{2,3,\ldots\}$ into $A$ and $B$ so that all multiplicatively dependent integers fall into the same class. 
Then the set of real numbers normal in any base from $A$ and in no base from $B$ has Hausdorff dimension 1.
Explicit construction of numbers which are $a$-normal but not $b$-normal  
 is exploited when $a$ divides $b$, e.g., \cite{Wag}, \cite{KS}, \cite{JV}.
However, we do not yet know a concrete example of a $2$-normal number which is not $3$-normal.
\end{rem}

\begin{rem}
Theorem~\ref{thmTS} does not extend to an infinite partition, due to an example by
J\"ager \cite{Jag} for the case of $\beta=10$. 
Let $T=\{10 x\}$ on $[0,1]$ and $x=(.x_1x_2\ldots)$ be 
the coding of $x$ by $T$, i.e., the decimal expansion of $x$.
Let $m$ be the first occurrence of a fixed digit $r\in \{0,1,\dots, 9\}$ that 
$x_m=r$, then we define a jump transform 
$S_r(x):=(.x_{m+1}x_{m+2}\ldots)$. 
If there is no occurrence of $r$, put $S_r(x):=0$. 
Then every $T$-generic point is $S_r$-generic, but the converse does not hold.
\end{rem}

\begin{rem}
We show that the condition (\ref{Positive}) is not preserved after taking flips.
Let $\beta >1$ be a real number and $0 = t_0 < t_1 < \dots < t_k = 1$ is a finite partition of $[0,1]$.
Suppose that $T$ is a map on $[0,1]$ which has slope of $\pm \beta^{m_i}$ on
$[t_{i-1},t_i)$ and has an invariant measure which is equivalent to the Lebesgue measure.
If $S$ is a locally flipped map of $T$ on $[0,1]$, that is, on one interval $[t_{i-1},t_i)$, 
$S$ has the opposite slope $\mp \beta^{m_i}$ and $T((t_{i-1}+t_i)/2)=S((t_{i-1}+t_i)/2)$, 
then one might expect that $S$ 
also has an invariant measure equivalent to the Lebesgue measure. Unfortunately
this is not true. Here is a counter example. Let $1<\beta<\sqrt{2}$ and put
$$
S(x)=\begin{cases} -\beta x+1 & x\in [0,1/\beta), \\
                  \beta x -1 & x\in [1/\beta,1].\end{cases}
$$
The map $S$ is a locally flipped map of the beta transformation $T$ having density away from zero.
Since the dynamics of $S$ on $[\beta-1,-\beta^2+\beta+1]$ is dissipative, 
the density of $S$ on $[\beta-1,-\beta^2+\beta+1]$ is zero. 
The explicit densities of flipped beta expansions are given in Gora \cite{Gora0}.
\end{rem}

\section{Examples}

We apply Theorem~\ref{thmTS} to certain families of piecewise linear maps on $[0,1]$. 

\begin{exam}
\label{FlipDecimal}
Let $r$ be an integer greater than 1. For $\mathbf s=(s_0,s_1,\ldots,s_{r-1})\in \{0,1\}^r$, let 
$T(r, \mathbf s;x):[0,1]\to[0,1]$ be a map defined by 
\begin{align*}
T(r, \mathbf s;x):=
\begin{cases}
s_i+(-1)^{s_i}\{r x\} & \mbox{if }x\in [i/r,(i+1)/r)\\ 
0 & \mbox{if }x=1. 
\end{cases}
\end{align*}
Then $([0,1], \B, \lambda, T(r,s;x))$ is an ergodic measure preserving system, where $\lambda$ is 
the Lebesgue measure. 
Let $q$ be integers greater than 1 and $\mathbf t \in \{0,1\}^q$.  
Assume that $q$ and $r$ are multiplicatively dependent. 
Then $T(q, \mathbf t;x)$ and $T(r, \mathbf s;x)$ are generic point equivalent. 
As a special case, the tent map
$$
f(x)=\begin{cases} 2x & 0\le x <1/2 \\
                   2(1-x) & 1/2\le x\le 1\end{cases}
$$
and the binary expansion map $T(x)=\{2x\}$ are generic point equivalent. 
This simple case already seems new. 
Indeed, this serves an alternative proof of Corollary 19 in \cite{AJKM} which solves several conjectures posed
in \cite{SS}, as the set of $2$-normal numbers lies exactly in the 3rd Borel-hierarchy by \cite{KL}.
\end{exam}

The following examples were shown by Kraaikamp and Nakada in \cite{KN}.

\begin{exam}
\label{Counter}
Consider the maps 
$T_1:[0,1]\to[0,1]$ and $S_1:[0,1]\to [0,1]$ defined by $T_1(x)=\{2x\}$ and 
\begin{align*}
S_1(x):=\begin{cases}
2 x, & x\in [0,1/2),\\
\{ 4 x \}, & x\in[1/2,1].
\end{cases}
\end{align*}
Let $\beta = (\sqrt 5 + 1)/2$. 
Define $T_2 :[0,1]\to[0,1]$ and $S_2 :[0,1]\to [0,1]$ by $T_2 (x)=\{ \beta x\}$ and 
\begin{align*}
S_2 (x):=\begin{cases}
\beta x, & x\in [0,1/\beta),\\
\beta^2 x -\beta, & x\in[1/\beta,1].
\end{cases}
\end{align*}
Let $i\in \{1,2\}$ be fixed. 
Then Theorem~\ref{thmTS} implies that $x\in [0,1]$ is $T_i$-generic if and only if $x$ is $S_i$-generic. 
The graphs of $T_1$, $S_1$ and graphs of $T_2$, $S_2$ are shown in Figure~\ref{fig1} and \ref{fig2} respectively. 
\end{exam}

\begin{figure}
\begin{center}
\begin{tikzpicture}[scale=1]
	\node[below left] at (0, 0) {$0$};
	\draw[->] (-0.5, 0) -- (4.5, 0); 
	\draw[->] (0, -0.5) -- (0, 4.5); 
	\draw[very thick] (0, 0) -- (2,4);
	\draw[very thick] (2, 0) -- (4,4);

	\draw[dotted] (2, 4) -- (2,0) node[below] {$\frac 12$};
	\draw[dotted] (4, 4) -- (4,0) node[below] {$1$};

	\draw[dotted] (4, 4) -- (0,4) node[left] {$1$};

\end{tikzpicture}
\qquad
\begin{tikzpicture}[scale=1]
	\node[below left] at (0, 0) {$0$};
	\draw[->] (-0.5, 0) -- (4.5, 0); 
	\draw[->] (0, -0.5) -- (0, 4.5); 
	\draw[very thick] (0, 0) -- (2,4);
	\draw[very thick] (2, 0) -- (3,4);
	\draw[very thick] (3, 0) -- (4,4);

	\draw[dotted] (2, 4) -- (2,0) node[below] {$\frac 12$};
	\draw[dotted] (3, 4) -- (3,0) node[below] {$\frac 34$};
	\draw[dotted] (4, 4) -- (4,0) node[below] {$1$};

	\draw[dotted] (4, 4) -- (0,4) node[left] {$1$};
\end{tikzpicture}
\caption{The maps $T_1$ and $S_1$ in Example~\ref{Counter}\label{fig1}}
\end{center}
\end{figure}

\begin{figure}
\begin{center}
\begin{tikzpicture}[scale=1]
	\node[below left] at (0, 0) {$0$};
	\draw[->] (-0.5, 0) -- (4.5, 0);
	\draw[->] (0, -0.5) -- (0, 4.5); 
	\draw[very thick] (0, 0) -- (2.472,4);
	\draw[very thick] (2.472, 0) -- (4,2.472);

	\draw[dotted] (2.472, 4) -- (2.472,0) node[below] {$1/\beta$};
	\draw[dotted] (4, 4) -- (4,0) node[below] {$1$};

	\draw[dotted] (4, 4) -- (0,4) node[left] {$1$};
	\draw[dotted] (4, 2.472) -- (0,2.472) node[left] {$\beta-1$};
\end{tikzpicture}
\qquad
\begin{tikzpicture}[scale=1]
	\node[below left] at (0, 0) {$0$};
	\draw[->] (-0.5, 0) -- (4.5, 0);
	\draw[->] (0, -0.5) -- (0, 4.5); 
	\draw[very thick] (0, 0) -- (2.472,4);
	\draw[very thick] (2.472, 0) -- (4,4);

	\draw[dotted] (2.472, 4) -- (2.472,0) node[below] {$1/\beta$};
	\draw[dotted] (4, 4) -- (4,0) node[below] {$1$};

	\draw[dotted] (4, 4) -- (0,4) node[left] {$1$};
\end{tikzpicture}
\caption{The maps $T_2$ and $S_2$ in Example~\ref{Counter}\label{fig2}}
\end{center}
\end{figure}
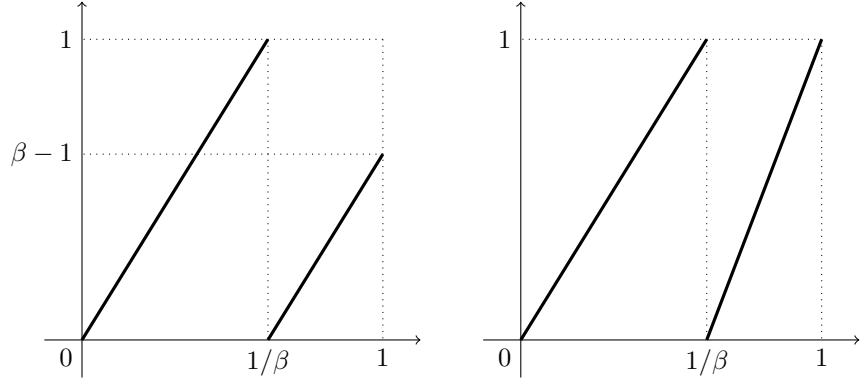

Examples \ref{FlipDecimal} and \ref{Counter} are generalized as follows.

\begin{exam}\label{Luroth}
Let $\beta$ be a Perron number: an algebraic integer greater than one whose conjugates
have modulus less than $\beta$. Handelman \cite{Han}
showed that $\beta$ has no other positive conjugates if and only if
there exist an $\ell\in \N$ and a 
nonnegative integer vector $(a_1,\dots, a_{\ell})$ satisfying 
$$
1=\sum_{i=1}^{\ell} \frac {a_i}{\beta^i}.
$$
If there exists such a vector, then there are infinitely many different expressions of $1$ 
of this form.
Assume further that $\beta$ is a Pisot number having no other positive conjugate. 
For such a vector $(a_1,\dots, a_{\ell})$ we can
partition $[0,1]$ into $a_1+\dots+a_{\ell}$ sub-intervals; $a_i$ intervals of length $\beta^{-i}$, arranged in arbitrary order, and construct a piecewise linear transformation $S$
of slopes $\pm \beta^{i}$ for $i=1,\dots,\ell$ whose all discontinuities are mapped to $\{0,1\}$.
The invariant measure of $S$ is the Lebesgue measure. 
All the maps $S$ produced from a fixed Pisot number $\beta$ in this manner are normality equivalent, because all of them are generic point equivalent to $T(x)=\{\beta x\}$ by Theorem~\ref{thmTS}.
\end{exam}

\begin{exam}
\label{ACIM}
Take a real number $\beta>1$ and $t\in [0,\lceil \beta \rceil/\beta-1]$. Define a map $S_t:[0,1]\to [0,1]$ by
\begin{align*}
S_t(x):=\begin{cases}
\beta x -\lfloor \beta x\rfloor, & x\in [0,\lfloor \beta \rfloor/\beta-t),\\
\beta (x-1)+1 & x\in [\lfloor \beta \rfloor/\beta-t,1].
\end{cases}
\end{align*}
See Figure~\ref{fig3} for the graphs of $S_t$ for some $t$.  
As the map $S_t$ has only one non trivial discontinuity at $r_0=l_0=\lfloor \beta \rfloor/\beta -t$,
it is ergodic with respect to a unique absolutely continuous invariant measure
(c.f. \cite{LiYorke}).
Its invariant density is made explicit as 
$$
h(x)=C + \sum_{x\ge r_n} \frac1{\beta^n} + \sum_{ x < l_n} \frac1{\beta^n},
$$
where the sums are taken over positive integers $n$. Here 
$r_n=S_t^n(\lfloor \beta \rfloor/\beta -t+0)$ and
$l_n=S_t^n(\lfloor \beta \rfloor/\beta -t-0)$.
The constant $C$ is computed as
$$
C=\frac {\beta-2}{\beta-1}+ \sum_{n=1}^{\infty} \frac{ \iota^+(n)-\iota^{-}(n)}{\beta^n}
$$
with
$$
\iota^{+}(n)=\begin{cases} 1 & r_n \ge r_0\\
                           0 & r_n < r_0
\end{cases}
\qquad \mbox{and} \qquad
\iota^{-}(n)=\begin{cases} 1 & l_n \ge l_0\\
                           0 & l_n < l_0.
\end{cases}
$$
Though $C$ can be negative, we claim for any pair $(\beta, t)$ that 
\begin{itemize}
\item[(*)]There exists a positive $c$ that $c^{-1}<h(x)<c$ if and only if $\beta\ge 2$. 
\end{itemize} 
Its proof is given in the appendix. 
Moreover, we shall show that $c$ depends only on $\beta$. 

Hence, we see that if $\beta$ is a Pisot number not less than 2, then the map $S$ satisfies the assumptions in Theorem~\ref{thmTS}. 
Therefore, if $\beta\not \in \Z$, then all maps in the one parameter family with cardinality of continuum 
$$
\left\{ S_t \ \left|\ t\in\R, \ 0\le t\le \frac{\lceil \beta \rceil}{\beta}-1 \right.\right\}
$$
are generic point equivalent by Theorem~\ref{thmTS}.
\end{exam}

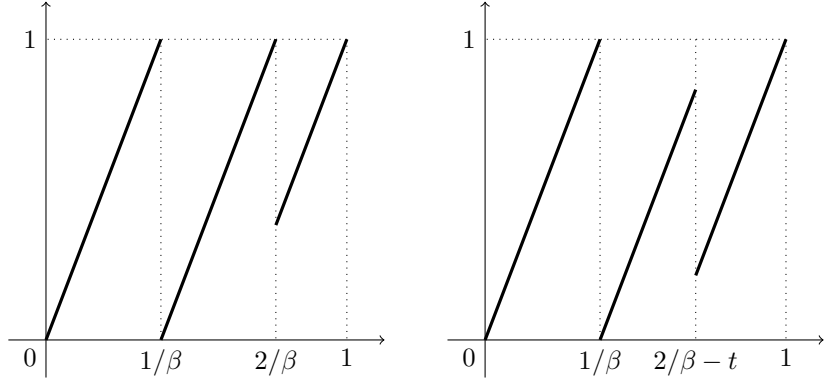
\begin{figure} 
\begin{center}
\begin{tikzpicture}[scale=1]
	\node[below left] at (0, 0) {$0$};
	\draw[->] (-0.5, 0) -- (4.5, 0);
	\draw[->] (0, -0.5) -- (0, 4.5); 
	\draw[very thick] (0, 0) -- (1.528,4);
	\draw[very thick] (1.528, 0) -- (3.056,4);
	\draw[very thick] (3.056, 1.528) -- (4, 4);

	\draw[dotted] (1.528, 4) -- (1.528,0) node[below] {$1/\beta$};
	\draw[dotted] (3.056, 4) -- (3.056,0) node[below] {$2/\beta$};
	\draw[dotted] (4, 4) -- (4,0) node[below] {$1$};

	\draw[dotted] (4, 4) -- (0,4) node[left] {$1$};
\end{tikzpicture}
\qquad
\begin{tikzpicture}[scale=1]
	\node[below left] at (0, 0) {$0$};
	\draw[->] (-0.5, 0) -- (4.5, 0);
	\draw[->] (0, -0.5) -- (0, 4.5); 
	\draw[very thick] (0, 0) -- (1.528,4);
	\draw[very thick] (1.528, 0) -- (2.8,3.3305);
	\draw[very thick] (2.8, 0.8585) -- (4, 4);

	\draw[dotted] (1.528, 4) -- (1.528,0) node[below] {$1/\beta$};
	\draw[dotted] (2.8, 4) -- (2.8,0) node[below] {$2/\beta - t$};
	\draw[dotted] (4, 4) -- (4,0) node[below] {$1$};

	\draw[dotted] (4, 4) -- (0,4) node[left] {$1$};
\end{tikzpicture}
\caption{The maps $S_0$ and $S_t$ in Example~\ref{ACIM} for $\beta = (3+\sqrt 5)/2$\label{fig3}}
\end{center}
\end{figure}

\section*{Acknowledgments}

The authors would like to thank Michihiro Hirayama for giving fruitful advice. 
This research was partially supported by JSPS grants (17K05159, 17H02849, BBD30028, 15K17505, 19K03439) and NRF of Korea (NRF-2018R1A2B6001624).

\appendix
\section{Positivity of invariant density}
To study the invariant densities of a piecewise linear map, a general method is established 
by Kopf \cite{Kopf} and Gora \cite{Gora}.  It works well for a given map. 
To deal with the parametrized family 
of maps in Example \ref{ACIM}, we follow an analogy of Parry \cite{Par, Par2} to calculate the invariant density and deduce the claim (*). 
For simplicity, we write $S=S_t$.
When $\beta<2$, the map $S$ is dissipative in $Y:=[0,r_1) \cup
[l_1,1)$ and $h(x)=0$ in $Y$. For an integer $\beta>1$, the map $S$ is the
$\beta$-adic transformation and  preserve the Lebesgue measure.
Therefore we have to show that $h(x)$ is positive for $\beta>2$ and
$\beta\not \in \Z$. 
Putting
$$
d_n^+(x)=\begin{cases} 1& x\ge r_n\\ 0 & x<r_n\end{cases}
\qquad \mbox{and} \qquad
d_n^-(x)=\begin{cases} 1& x< l_n\\ 0 & x\ge l_n\end{cases}
$$
for $n=1,2,\dots$, we see that $h(x)=C + \sum_{n=1}^{\infty} d_n(x)/\beta^n$
with $d_n(x):=d_n^+(x)+d_n^-(x)$. Define the digit $\alpha(x):=\beta x-S(x)$ for $x\in [0,1)$.
Then 
$$\mathcal{D}=\{\alpha(x) |\ x\in [0,1)\}= 
\{0,1,\dots, \lfloor \beta\rfloor-1\} \cup \{\beta-1\}.$$ 
Put $$
\mathcal{D}(x)=
\begin{cases} \mathcal{D} \setminus \{\beta-1\}, & x\in [0,r_1), \\
 \mathcal{D} & x\in [r_1,l_1),\\
 \mathcal{D}\setminus \{\lfloor \beta\rfloor -1\}, & x\in [l_1,1),\\
\end{cases}
$$
$$
e_n^+(x)=\mathrm{Card}\{ d\in \mathcal{D}(x)\ |\ d > \alpha(r_n)\} 
-\begin{cases} 1, & \alpha(r_n)=\lfloor \beta \rfloor -1 \ \mathrm{ and }\  x \ge l_1,\\
               0, & \mathrm{otherwise} \end{cases}
$$
and
$$
e_n^-(x)=\mathrm{Card}\{ d\in \mathcal{D}(x)\ |\ d < \alpha(l_n)\} 
-\begin{cases} 1, & \alpha(l_n)=\beta -1 \ \mathrm{ and }\  x < r_1,\\
               0, & \mathrm{otherwise}. \end{cases}
$$
Then we observe the key equality:
$$
\sum_{y\in S^{-1}( \{x\} )} d_n(y)= e_n(x)+d_{n+1}(x)
$$
with $e_n(x):=e_n^+(x)+e_n^{-}(x)$. Therefore 
\begin{align*}
\frac 1{\beta} \sum_{y\in S^{-1}( \{x\} )} h(y) 
&= \frac 1{\beta} \sum_{y\in S^{-1}( \{x\} )} C
+ \sum_{n=1}^{\infty} \frac 1{\beta^{1+n}} \sum_{y\in S^{-1}( \{x\} )} d_n(y) \\
&= \frac {(\lfloor \beta \rfloor-1+d_1(x))C}{\beta}
+ \sum_{n=1}^{\infty} \frac {e_n(x)+d_{n+1}(x)}{\beta^{n+1}}.
\end{align*}
To be an invariant density, we have to show that this is nothing but $h(x)$. 
It is sufficient to confirm:
$$
C \left(1- \frac {\lfloor \beta \rfloor -1}{\beta}-\frac {d_1(x)}{\beta}
\right) = \sum_{n=1}^{\infty} \frac{e_n(x)}{\beta^{n+1}}-\frac {d_1(x)}{\beta}.
$$
We can check that the integration over $[0,1]$ of both sides vanishes. Moreover, both sides 
take only two values, i.e., they are constant in $[0,r_1)\cup [l_1,1)$ and in $[r_1,l_1)$.
This shows the existence of a constant $C$. Computation of $C$ is therefore done at any point $x$ in $[0,1)$. 
Evaluating  at $x=0$, we have
$e_n^+(0) = \lfloor \beta \rfloor -1- \lfloor \alpha (r_n) \rfloor$ and $e_n^-(0) = \lfloor \alpha (l_n) \rfloor$.
Then we apply
$$
r_0=\sum_{n=1}^{\infty} \frac{\alpha(r_{n-1})}{\beta^{n}}, \quad
 l_0=\sum_{n=1}^{\infty} \frac{\alpha(l_{n-1})}{\beta^{n}}
$$
to obtain
$$
C=1-\frac 1{\beta-1} + \sum_{n=1}^{\infty} \frac {\iota^{+}(n)-\iota^-(n)}{\beta^n}
$$
and
\begin{equation*}
h(x)=1 + \sum_{n=1}^{\infty} \frac {\iota^{+}(n)-\iota^-(n)}{\beta^n} +
\sum_{n=1}^{\infty} \frac{d_n(x)-1}{\beta^n}.
\end{equation*}
We have $\iota^{+}(n)-\iota^-(n)\in \{-1,0,1\}$ and $d_n(x)-1\in \{-1,0,1\}$.
Note that $\iota^+(n)-\iota^-(n)=-1$ if and only if $r_n<\lfloor \beta\rfloor/\beta-t\le l_n$,
and $r_n<l_n$ implies $d_n(x)\ge 1$. 
Moreover,
$\iota^+(n)-\iota^-(n)=1$ if and only if $l_n<\lfloor \beta\rfloor/\beta-t\le r_n$, 
and $l_n<r_n$ implies $d_n(x)\le 1$. Therefore we obtain
$$
\iota^+(n)-\iota^-(n)+d_n(x)-1\in \{-1,0,1\}
$$
and
$$
\frac {\beta-2}{\beta-1} \le h(x)\le \frac{\beta}{\beta-1}.
$$
\end{document}